\subjclass[2010]{Primary: 14B05, Secondary: 14E30, 14E16, 14E18}
\numberwithin{equation}{section}
\numberwithin{figure}{section}
\theoremstyle{plain}
\newtheorem{thm}{\protect\theoremname}[section]
  \theoremstyle{plain}
  \newtheorem{cor}[thm]{\protect\corollaryname}
  \theoremstyle{plain}
  \newtheorem{prop}[thm]{\protect\propositionname}
  \theoremstyle{plain}
  \newtheorem{lem}[thm]{\protect\lemmaname}
  \theoremstyle{remark}
  \newtheorem{rem}[thm]{\protect\remarkname}
  \providecommand{\corollaryname}{Corollary}
  \providecommand{\lemmaname}{Lemma}
  \providecommand{\propositionname}{Proposition}
  \providecommand{\remarkname}{Remark}
\providecommand{\theoremname}{Theorem}
\begin{document}

\title{Discrepancies of $p$-cyclic quotient varieties}

\author{Takehiko Yasuda}
\begin{abstract}
We consider the quotient variety associated to a linear representation
of the cyclic group of order $p$ in characteristic $p>0$. We estimate
the minimal discrepancy of exceptional divisors over the singular
locus. In particular, we give criteria for the quotient variety being
terminal, canonical and log canonical. As an application, we obtain
new examples of non-Cohen-Macaulay terminal singularities, adding
to examples recently announced by Totaro. 
\end{abstract}

\address{Department of Mathematics, Graduate School of Science, Osaka University,
Toyonaka, Osaka 560-0043, Japan, tel:+81-6-6850-5326, fax:+81-6-6850-5327}

\curraddr{Mathematical Institute, Tohoku University, Aoba, Sendai, 980-8578,
JAPAN}

\email{takehiko.yasuda.a5@tohoku.ac.jp}

\thanks{This work was supported by JSPS KAKENHI Grant Numbers JP15K17510
and JP16H06337. }

\maketitle
\global\long\def\bigmid{\mathrel{}\middle|\mathrel{}}

\global\long\def\AA{\mathbb{A}}
\global\long\def\PP{\mathbb{P}}
\global\long\def\NN{\mathbb{N}}
\global\long\def\GG{\mathbb{G}}
\global\long\def\ZZ{\mathbb{Z}}
\global\long\def\QQ{\mathbb{Q}}
\global\long\def\CC{\mathbb{C}}
\global\long\def\FF{\mathbb{F}}
\global\long\def\LL{\mathbb{L}}
\global\long\def\RR{\mathbb{R}}
\global\long\def\MM{\mathbb{M}}
\global\long\def\SS{\mathbb{S}}

\global\long\def\bx{\boldsymbol{x}}
\global\long\def\by{\boldsymbol{y}}
\global\long\def\bf{\mathbf{f}}
\global\long\def\ba{\mathbf{a}}
\global\long\def\bs{\mathbf{s}}
\global\long\def\bt{\mathbf{t}}
\global\long\def\bw{\mathbf{w}}
\global\long\def\bb{\mathbf{b}}
\global\long\def\bv{\mathbf{v}}
\global\long\def\bp{\mathbf{p}}
\global\long\def\bq{\mathbf{q}}
\global\long\def\bm{\mathbf{m}}
\global\long\def\bj{\mathbf{j}}
\global\long\def\bM{\mathbf{M}}
\global\long\def\bd{\mathbf{d}}
\global\long\def\bz{\mathbf{z}}
\global\long\def\bG{\mathbf{G}}
\global\long\def\bg{\mathbf{g}}
\global\long\def\bh{\mathbf{h}}

\global\long\def\cN{\mathcal{N}}
\global\long\def\cW{\mathcal{W}}
\global\long\def\cY{\mathcal{Y}}
\global\long\def\cM{\mathcal{M}}
\global\long\def\cF{\mathcal{F}}
\global\long\def\cX{\mathcal{X}}
\global\long\def\cE{\mathcal{E}}
\global\long\def\cJ{\mathcal{J}}
\global\long\def\cO{\mathcal{O}}
\global\long\def\cD{\mathcal{D}}
\global\long\def\cZ{\mathcal{Z}}
\global\long\def\cR{\mathcal{R}}
\global\long\def\cC{\mathcal{C}}
\global\long\def\cL{\mathcal{L}}
\global\long\def\cV{\mathcal{V}}
\global\long\def\cI{\mathcal{I}}
\global\long\def\cH{\mathcal{H}}
\global\long\def\cK{\mathcal{K}}
\global\long\def\cS{\mathcal{S}}

\global\long\def\fs{\mathfrak{s}}
\global\long\def\fp{\mathfrak{p}}
\global\long\def\fm{\mathfrak{m}}
\global\long\def\fX{\mathfrak{X}}
\global\long\def\fV{\mathfrak{V}}
\global\long\def\fx{\mathfrak{x}}
\global\long\def\fv{\mathfrak{v}}
\global\long\def\fY{\mathfrak{Y}}
\global\long\def\fa{\mathfrak{a}}
\global\long\def\fb{\mathfrak{b}}
\global\long\def\fc{\mathfrak{c}}
\global\long\def\fD{\mathfrak{D}}
\global\long\def\fS{\mathfrak{S}}
\global\long\def\fO{\mathfrak{O}}
\global\long\def\fM{\mathfrak{M}}

\global\long\def\rv{\mathbf{\mathrm{v}}}
\global\long\def\rx{\mathrm{x}}
\global\long\def\rw{\mathrm{w}}
\global\long\def\ry{\mathrm{y}}
\global\long\def\rz{\mathrm{z}}
\global\long\def\bv{\mathbf{v}}
\global\long\def\bw{\mathbf{w}}
\global\long\def\sv{\mathsf{v}}
\global\long\def\sx{\mathsf{x}}
\global\long\def\sw{\mathsf{w}}

\global\long\def\Spec{\operatorname{Spec}}
\global\long\def\id{\mathrm{id}}

\global\long\def\Gal{\operatorname{Gal}}
\global\long\def\det{\mathrm{det}}
\global\long\def\Aut{\mathrm{Aut}}
\global\long\def\sm{\mathrm{sm}}
\global\long\def\sing{\mathrm{sing}}
\global\long\def\discrep#1{\mathrm{discrep}\left(#1\right)}
\global\long\def\ord{\mathrm{ord}\,}
\global\long\def\Exc#1{\mathrm{Exc}\left(#1\right)}
\global\long\def\mld#1#2{\mathrm{mld}(#1;#2)}
\global\long\def\center{\mathrm{center}}
\global\long\def\sht{\mathrm{sht}}
\global\long\def\st{\mathrm{st}}
\global\long\def\ae{\mathrm{a.e.}}

\section{Introduction}

Let $k$ be an algebraically closed field of characteristic $p>0$
and $G=\ZZ/p$ the cyclic group of order $p$. Suppose that $G$ linearly
acts on the $d$-dimensional affine space $V=\AA_{k}^{d}$. Let $X:=V/G$
be the quotient variety. This variety is factorial (see \cite[Th. 3.8.1]{MR2759466}),
but not necessarily Cohen-Macaulay. We are interested in singularities
of $X$ from the viewpoint of the minimal model program. 

To state our main results, we recall basic notions concerning singularities
and introduce some notation. Let us consider a modification (proper
birational morphism) $f\colon Y\to X$ such that $Y$ is normal, and
the exceptional locus $\Exc f\subset Y$ and the preimage $f^{-1}(X_{\sing})$
of $X_{\sing}$ are both of pure dimension $d-1$. We will call such
a morphism an \emph{admissible modification. }Note that the last condition
implies $f^{-1}(X_{\sing})\subset\Exc f$. Note also that an arbitrary
modification $Y\to X$ can be altered into an admissible one by blowup
and normalization. For an admissible modification $f\colon Y\to X$,
we define the \emph{relative canonical divisor} $K_{Y/X}$ in the
usual way, which is a Weil divisor with support contained in $\Exc f$.
Let $\Exc f=\bigcup_{i\in\cE_{f}}E_{i}$ and $f^{-1}(X_{\sing})=\bigcup_{i\in\cS_{f}}E_{i}$
be the decompositions into irreducible components with $\cS_{f}\subset\cE_{f}$
and write $K_{Y/X}=\sum_{i\in\cE_{f}}a_{i}E_{i}$, where $a_{i}$
are integers called \emph{discrepancies}. We define
\begin{align*}
\delta(X):=\discrep{\center\subset X_{\sing};X} & =\inf_{f}\min_{i\in\cS_{f}}a_{i}.
\end{align*}
Here $f$ runs over admissible modifications of $X$. We have either
$\delta(X)\ge-1$ or $\delta(X)=-\infty$ \cite[Cor. 2.31]{MR1658959}.
We say that $X$ is \emph{terminal} (resp. \emph{canonical, log canonical})
if $\delta(X)>0$ (resp. $\ge0$, $\ge-1$). Note that since our variety
$X$ is factorial and hence $\delta(X)$ is an integer if not $-\infty$,
that $X$ is canonical is equivalent to that $X$ is log terminal
(meaning $\delta(X)>-1$). 

We will estimate $\delta(X)$ in terms of the given $G$-representation
$V$. For each integer $i$ with $1\le i\le p$, there exists a unique
indecomposable representation of $G$ over $k$; we denote it as $V_{i}$.
The representation $V$ decomposes into indecomposable ones: $V=\bigoplus_{\lambda=1}^{l}V_{d_{\lambda}}$
with $1\le d_{\lambda}\le p$ and $\sum d_{\lambda}=d$. The decomposition
is unique up to permutation of direct summands. We define an invariant
$D_{V}$ by
\[
D_{V}:=\sum_{\lambda=1}^{l}\frac{(d_{\lambda}-1)d_{\lambda}}{2}.
\]
We easily see that $D_{V}=0$ if and only if the $G$-action is trivial,
and also that $D_{V}=1$ if and only if a generator of $G$ is a pseudo-reflection
(that is, the fixed point locus $V^{G}$ has codimension one). In
these cases, the quotient variety $X$ is again isomorphic to $\AA_{k}^{d}$.
Excluding these cases, we assume in what follows that $D_{V}\ge2$.
The fixed point locus $V^{G}$ is then an $l$-dimensional linear
subspace of $V$. The quotient morphism $V\to X$ is \'etale outside
$V^{G}$ and the image of $V^{G}$ is exactly the singular locus $X_{\sing}$
of $X$. The invariant $D_{V}$ is clearly additive with respect to
direct sums. For indecomposable representations of small dimensions,
we have
\[
D_{V_{1}}=0,\,D_{V_{2}}=1,\,D_{V_{3}}=3,\,D_{V_{4}}=6,\,D_{V_{5}}=10.
\]
From \cite{MR581583}, $X$ is Cohen-Macaulay if and only if $d-l=\mathrm{codim}\,V^{G}\le2$.
If $D_{V}=2$, then $V=V_{2}^{\oplus2}\oplus V_{1}^{\oplus(d-4)}$
for some $n$ and $X$ is Cohen-Macaulay. Note that $V_{1}^{\oplus n}$
is the $n$-dimensional trivial representation. If $D_{V}=3$, then
$V$ is either $V_{3}\oplus V_{1}^{\oplus(d-3)}$ or $V_{2}^{\oplus3}\oplus V_{1}^{\oplus(d-6)}$;
$X$ is Cohen-Macaulay in the former and not Cohen-Macaulay in the
latter. If $D_{V}\ge4$, then $X$ is never Cohen-Macaulay. 

In \cite[Propositions 6.6 and 6.9]{MR3230848}, the author proved
the following result.
\begin{thm}
\label{thm:old}If $D_{V}\ge p$, then $X$ is canonical. 
\end{thm}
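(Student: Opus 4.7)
The plan is to translate the computation of $\delta(X)$ into a problem about $G$-Galois covers of $\Spec k[[t]]$ via the wild McKay / arc-space framework. Every prime divisor $E$ over $X$ with center in $X_{\sing}$ corresponds (up to equivalence) to a $G$-equivariant arc on $V$, which after completion at the relevant maximal ideal amounts to a $G$-Galois cover $\Spec k[[t]]' \to \Spec k[[t]]$ together with an equivariant map to $V$. The discrepancy $a_E$ can then be read off from the ramification data of this cover together with the representation $V$. Since $G = \ZZ/p$ in characteristic $p$, Artin--Schreier theory classifies the nontrivial covers by classes $f(t) \in k((t))/\wp(k((t)))$ with $\wp(y) = y^p - y$; fixing the canonical representative $f = \sum_{i<0,\, p \nmid i} a_i t^i$, let $n$ denote its pole order, so $\gcd(n,p) = 1$.

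The next step is to record the explicit formula for $a_E$ in terms of $n$ and the decomposition $V = \bigoplus_{\lambda} V_{d_\lambda}$. By additivity under direct sums of representations, the relevant ``shift'' invariant $\sht$ splits as a sum of contributions $s(n, d_\lambda)$, one per indecomposable summand, each expressible via floor functions of $n d_\lambda / p$. Summing the leading-in-$n$ pieces yields a main term of the shape $n D_V / p$, with bounded-in-$n$ corrections governed by the residues $n d_\lambda \bmod p$; the coprimality $\gcd(n,p) = 1$ is essential to pin down these residues.

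It remains to show $a_E \geq 0$ whenever $D_V \geq p$. The main term $n D_V / p \geq n$ dominates the bounded corrections once $n$ is large, so the inequality is automatic outside a finite range, and one reduces to $1 \leq n < p$ with $\gcd(n,p) = 1$. The main obstacle is this last, purely numerical step: verifying a floor-function inequality for every partition $(d_1, \ldots, d_l)$ of $d$ satisfying $\sum d_\lambda(d_\lambda - 1)/2 \geq p$. The natural strategy is to exploit the additivity of $D_V$ under direct sums, peel off summands $V_1$ (which contribute nothing), and reduce to extremal configurations where either a single $V_{d_\lambda}$ already satisfies $D_{V_{d_\lambda}} \geq p$, or a small number of summands jointly do. In each such extremal case the required inequality is an elementary manipulation of $\lfloor n d_\lambda / p \rfloor$, and the threshold $D_V = p$ is exactly what is needed to prevent a negative discrepancy from appearing at $n = 1$.
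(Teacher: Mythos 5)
Your overall framework is the one the paper uses: cover the arcs through $X_{\sing}$ by families indexed by the Artin--Schreier ramification jump $j$, so that the candidate discrepancy attached to jump $j=np+s$ is $d-1-\nu(M_j)=d-1-l-(p-1-D_V)n-s+\sht_V(s)$, and minimize over $j$. Two points in your setup need tightening. First, the passage from divisors over $X$ to $G$-covers of $\Spec k[[t]]$ is not automatic in positive characteristic; the paper grounds it in Reguera's theorem that stable points of the arc space define divisorial valuations (Proposition \ref{prop:precise-1}). Second, the reduction to jumps $1\le n\le p-1$ should come from the exact periodicity $\sht_V(j+p)=\sht_V(j)+D_V$, which shifts the candidate discrepancy by $D_V-p+1\ge0$ at each step of $p$; your ``main term dominates for $n$ large'' argument only excludes $n$ beyond roughly $p(d-l)$, leaving a larger finite range unaccounted for.

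The genuine gap is the final numerical inequality, which you yourself identify as ``the main obstacle'' and then only sketch. What must be shown is that $l+s-\sht_V(s)\le d-1$ for all $1\le s\le p-1$, equivalently (via the reflection identity $s-\sht_V(s)=\sht_V(p-s)+s+d-l-D_V$ of Lemma \ref{lem:sht}) that $\sht_V(p-s)+s\le D_V-1$. Your proposed reduction to ``extremal configurations'' is legitimate as far as it goes --- adding a summand $V_e$ increases $\sht_V(s)$ and increases $d-l$, so minimal configurations with $D_V\ge p$ suffice --- but those minimal configurations include $V_2^{\oplus p}$, $V_3\oplus V_2^{\oplus(p-3)}$, and a zoo of mixed types growing with $p$, so ``a single summand or a small number of summands'' does not cover them, and no uniform elementary manipulation is exhibited. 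The paper closes exactly this gap with one uniform estimate (Lemma \ref{lem:sht-upper}): since $\gcd(s,p)=1$, the fractional parts of $is/p$ for $i=1,\dots,d_\lambda-1$ are distinct elements of $\{1/p,\dots,(p-1)/p\}$, whence $\sum_i\lfloor is/p\rfloor\le\frac{(s-1)(d_\lambda-1)d_\lambda}{2p}$ and $\sht_V(s)\le(s-1)D_V/p$; combined with the reflection identity this yields $\delta(X)\ge\frac{2D_V}{p}-2\ge0$ when $D_V\ge p$, with no case analysis. A further misstatement worth flagging: the threshold $D_V\ge p$ does not bite at jump $n=1$, where $\sht_V(1)=0$ gives candidate discrepancy $d-l-2\ge0$; it bites at jump $p-1$, where $\sht_V(p-1)=D_V-(d-l)$ forces $\delta(X)\le D_V-p$.
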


This result provided the first example of log terminal (even canonical)
but not Cohen-Macaulay singularities in all positive characteristics
(recall that log terminal singularities in characteristic zero are
always Cohen-Macaulay). For instance, $X$ has such singularities
when $V=V_{2}^{\oplus3}$ in characteristic two or $V=V_{4}$ in characteristic
five. Later, further examples of non-Cohen-Macaulay log terminal or
canonical singularities \cite{Gongyo:2015kl,Cascini:2016lr,Kovacs:2017cl,Bernasconi:2017jl}
were found. The above theorem also shows that when $V=V_{3}$ in characteristic
$p\ge5$, $X$ is Cohen-Macaulay (in fact, a hypersurface) but not
log terminal (recall that quotient singularities in characteristic
zero are always log terminal). It should be mentioned that Hacon and
Witaszek \cite{Hacon:2017xw} proved that in sufficiently large characteristics,
three-dimensional log terminal singularities are Cohen-Macaulay. 

The aim of this paper is to strengthen the above theorem. For a positive
integer $j$ with $p\nmid j$, we define 
\[
\sht_{V}(j):=\sum_{\lambda=1}^{l}\sum_{i=1}^{d_{\lambda}-1}\left\lfloor \frac{ij}{p}\right\rfloor .
\]
Here $\left\lfloor \cdot\right\rfloor $ denotes the round down of
rational numbers to integers. The following two theorems are our main
results:
\begin{thm}
\label{thm:precise}Suppose that $D_{V}\ge2$. Then $D_{V}<p-1$ if
and only if $\delta(X)=-\infty$. If $D_{V}\ge p-1$, then 
\begin{align}
\delta(X) & =d-1-l-\max_{1\le s\le p-1}\{s-\sht_{V}(s)\}\label{eq:intro-1}\\
 & =D_{V}-1-\max_{1\le s\le p-1}\{\sht_{V}(p-s)+s\}.\label{eq:intro-2}
\end{align}
\end{thm}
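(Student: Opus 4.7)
My plan is to adapt the author's wild McKay framework from \cite{MR3230848}. I will parametrize divisorial valuations $v$ of $K(X)$ with center in $X_{\sing}$ by the wild ramification data of their lifts under the quotient $\pi\colon V \to X$, and then compute the discrepancy via a change-of-variables formula in which $\sht_V$ naturally appears.

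The first step is classification. Since $\pi$ is \'etale over $X\setminus X_{\sing}$, each such $v$ lifts uniquely to a $G$-equivariant divisorial valuation $\tilde v$ of $K(V)$ centered in $V^G$, and the induced extension $\widehat{K(V)}_{\tilde v}/\widehat{K(X)}_v$ of complete DVRs is an Artin--Schreier $G$-extension $z^p - z = f(t)$. Normalizing modulo $f \sim f + \wp(g)$ so that $f = \sum_{i\ge 1,\, p\nmid i} a_i t^{-i}$, one parametrizes $\tilde v$ by the leading pole $s$ (an integer coprime to $p$) together with the sub-leading coefficients. Next, choosing coordinates $x_{\lambda,1},\dots,x_{\lambda,d_\lambda}$ on each summand $V_{d_\lambda}$ adapted to the Jordan form $\sigma^*(x_{\lambda,i}) = x_{\lambda,i} + x_{\lambda,i+1}$, I will pull them back along the cover and show that $x_{\lambda,i}$ acquires order $\lfloor is/p\rfloor$ in the uniformizer of $\tilde v$, so that summing over $(\lambda,i)$ produces exactly $\sht_V(s)$. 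A change-of-variables for $\pi$ should then yield a discrepancy formula of the shape
\[
a(v) = d - 1 - l - \bigl(s - \sht_V(s)\bigr) + \varepsilon(v),
\]
where $\varepsilon(v)$ is a correction term controlled by the sub-leading Artin--Schreier data and vanishing for the \emph{generic} valuation with $f = t^{-s}$.

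The dichotomy will then be analyzed through the identity $\sht_V(s+p) = \sht_V(s) + D_V$, which follows from $\lfloor i(s+p)/p\rfloor = \lfloor is/p\rfloor + i$; equivalently, $\phi(s) := s - \sht_V(s)$ satisfies $\phi(s+p) = \phi(s) + (p - D_V)$. A careful accounting of how much $\varepsilon(v)$ shifts against the gain $p-D_V$ from enlarging $s$ by $p$ should show that the balance point lies exactly at $D_V = p-1$: when $D_V < p-1$, I will exhibit families of valuations making $a(v)$ arbitrarily negative, so $\delta(X) = -\infty$; when $D_V \ge p-1$, the infimum is forced to be attained at a generic valuation with $s \in \{1,\dots,p-1\}$, giving the first equality. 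The second equality is then immediate from the symmetry $\sht_V(s) + \sht_V(p-s) = D_V - (d-l)$, itself a direct consequence of $\lfloor is/p\rfloor + \lfloor i(p-s)/p\rfloor = i-1$ for $1\le i\le d_\lambda-1\le p-1$ and $1\le s\le p-1$.

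The hardest part will be quantifying the correction $\varepsilon(v)$ precisely enough to resolve the dichotomy sharply at $D_V = p-1$, rather than at the weaker value $D_V = p$ implicit in Theorem~\ref{thm:old}. This requires a fine analysis of the interaction between the sub-leading Artin--Schreier coefficients and the $G$-equivariant filtration on the completion of $V$ along $V^G$, refining the methods of \cite{MR3230848}.
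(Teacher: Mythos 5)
Your proposal correctly identifies the right combinatorial skeleton --- the parametrization of wild ramification data by Artin--Schreier covers with jump $s$, the periodicity $\sht_{V}(s+p)=\sht_{V}(s)+D_{V}$, and the reflection identity $\sht_{V}(s)+\sht_{V}(p-s)=D_{V}-(d-l)$, which is exactly Lemma \ref{lem:sht} and does give the passage from (\ref{eq:intro-1}) to (\ref{eq:intro-2}). But the two steps that carry all the content of the theorem are left unproved, and I do not see how your framework closes them. First, the correction term $\varepsilon(v)$: as you yourself note, your leading-order accounting gives $\phi(s+p)=\phi(s)+(p-D_{V})$ and hence would place the dichotomy at $D_{V}=p$, i.e.\ it only recovers Theorem \ref{thm:old}. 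The sharp threshold $D_{V}=p-1$ comes from an extra $-\lfloor j/p\rfloor=-n$ in the exponent (see Proposition \ref{prop:M_j} and equation (\ref{eq:M_j-3})), which is the dimension of the moduli of Artin--Schreier extensions with jump $j=np+s$: the sub-leading coefficients you mention contribute positively to the motivic volume of the corresponding piece of the arc space, rather than entering as a per-valuation correction to $a(v)$. Declaring this ``the hardest part'' and deferring it means the proposal, as written, proves nothing beyond Theorem \ref{thm:old}.

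Second, and more structurally: even granting a discrepancy formula for your generic valuations, you need the reverse inequality, namely that \emph{every} exceptional divisor over $X$ with center in $X_{\sing}$ has discrepancy at least the claimed value. In characteristic $p$ there is no log resolution to reduce to a finite list of divisors, and a valuation-by-valuation parametrization gives no completeness statement (nor is the lift of $v$ to $K(V)$ always a ramified Artin--Schreier extension of DVRs --- the extension of $v$ can be split or inert even when the center lies in $X_{\sing}$, so the classification step already needs care). The paper's substitute is Proposition \ref{prop:precise-1}: every divisorial valuation contributes a set $f_{\infty}(N_{i,b}^{f})$ in the arc space, these are covered a.e.\ by the countably many $M_{j}$ via Reguera's theorem on stable points, and Lemma \ref{lem:ineq} converts the covering into the inequality $a_{i}\ge d-1-\sup_{j}\nu(M_{j})$. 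Some mechanism of this kind (or an actual resolution) is indispensable both for the ``if'' direction of the first assertion and for obtaining equality, rather than merely ``$\le$'', in (\ref{eq:intro-1}).
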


\begin{thm}
\label{thm:log-term}Suppose that $D_{V}\ge2$. Then 
\begin{equation}
\delta(X)\le D_{V}-p.\label{eq:intro-3}
\end{equation}
If $D_{V}\ge p$, then we also have
\begin{equation}
\delta(X)\ge\frac{2D_{V}}{p}-2.\label{eq:intro-4}
\end{equation}
\end{thm}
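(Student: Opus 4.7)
The plan is to derive both inequalities directly from the formulas in Theorem~\ref{thm:precise} by estimating $\sht_V$. For the upper bound $\delta(X) \le D_V - p$, the case $2 \le D_V < p - 1$ is automatic since then $\delta(X) = -\infty$ by Theorem~\ref{thm:precise}. When $D_V \ge p - 1$, the key observation is that $\sht_V(1) = 0$ because $\lfloor i/p \rfloor = 0$ for all $1 \le i \le d_\lambda - 1 \le p - 1$. Substituting $s = p - 1$ into \eqref{eq:intro-2}, the expression inside the maximum becomes $\sht_V(1) + (p - 1) = p - 1$, so $\max_s\{\sht_V(p-s) + s\} \ge p - 1$, yielding $\delta(X) \le D_V - 1 - (p-1) = D_V - p$.

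For the lower bound $\delta(X) \ge 2 D_V/p - 2$ under $D_V \ge p$, I would use \eqref{eq:intro-1} and upper-bound $M := \max_{1 \le s \le p-1}\{s - \sht_V(s)\}$. The crucial rewriting replaces each $\lfloor is/p \rfloor$ by $is/p - \{is/p\}$ and uses the identity $\sum_\lambda d_\lambda(d_\lambda - 1)/2 = D_V$ to obtain
\[
\sht_V(s) = \frac{s D_V}{p} - F_s, \qquad F_s := \sum_{\lambda=1}^{l}\sum_{i=1}^{d_\lambda - 1}\!\left\{\frac{is}{p}\right\},
\]
so that $s - \sht_V(s) = s(1 - D_V/p) + F_s$.

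The heart of the argument is a sharp combinatorial upper bound for $F_s$. For each $\lambda$ and each fixed $s \in \{1, \ldots, p - 1\}$, the residues $is \bmod p$ with $1 \le i \le d_\lambda - 1$ are $d_\lambda - 1$ distinct nonzero elements of $\{1, \ldots, p - 1\}$, so their sum is at most $(p-1) + (p-2) + \cdots + (p - d_\lambda + 1) = (d_\lambda - 1)(2p - d_\lambda)/2$. Summing over $\lambda$ and again invoking $\sum_\lambda d_\lambda(d_\lambda - 1)/2 = D_V$ produces the clean bound $F_s \le (d - l) - D_V/p$. Since $D_V \ge p$ forces $1 - D_V/p \le 0$, we have $s(1 - D_V/p) \le 1 - D_V/p$ for $s \ge 1$; combining gives $M \le (d - l) + 1 - 2 D_V/p$, and substituting into \eqref{eq:intro-1} yields $\delta(X) \ge 2 D_V/p - 2$. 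I expect the main obstacle to be locating this sharp estimate for $F_s$: the naive bound $\{is/p\} < 1$ yields only $F_s < d - l$, which reduces the desired inequality to $D_V \le d - l$, valid only when every $d_\lambda \le 2$. It is precisely the distinctness of the residues $is \bmod p$ across $i$ within each summand $V_{d_\lambda}$ that supplies the crucial $-D_V/p$ correction term and makes the bound tight (as one sees directly when $V = V_p$).
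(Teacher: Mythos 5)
Your proposal is correct and follows essentially the same route as the paper: the upper bound is obtained identically by plugging $s=p-1$ (so $\sht_{V}(1)=0$) into \eqref{eq:intro-2}, and the lower bound rests on the same key combinatorial fact, namely that for fixed $s$ the residues $is\bmod p$, $1\le i\le d_{\lambda}-1$, are distinct, which supplies the crucial $D_{V}/p$ correction beyond the naive bound. The only cosmetic difference is that you work with \eqref{eq:intro-1} and bound the fractional-part sum $F_{s}$ from above, whereas the paper works with \eqref{eq:intro-2} and bounds $\sht_{V}(p-s)$ from above via Lemma \ref{lem:sht-upper} (equivalently, bounds the fractional parts from below); these are mirror images of one another via $\{x\}+\{-x\}=1$, i.e.\ via Lemma \ref{lem:sht}.
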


As a direct consequence of these, we obtain:
\begin{cor}
Suppose that $D_{V}\ge2$. Then $X$ is terminal (resp. canonical,
log canonical) if and only if $D_{V}>p$ (resp. $D_{V}\ge p$, $D_{V}\ge p-1$). 
\end{cor}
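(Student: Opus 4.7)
The plan is to read off each of the three equivalences directly from Theorems \ref{thm:precise} and \ref{thm:log-term}, using the general fact recalled in the introduction that $\delta(X)$ is either $-\infty$ or an integer $\ge -1$ (since $X$ is factorial). The strategy in each case is to apply one theorem for the forward direction and the other for the reverse, choosing the upper bound \eqref{eq:intro-3} when we want to rule out terminality/canonicality, and the lower bound \eqref{eq:intro-4} when we want to establish it.

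For the log canonical equivalence, Theorem \ref{thm:precise} already asserts that $\delta(X) = -\infty$ precisely when $D_V < p-1$. Combined with the general dichotomy $\delta(X) \ge -1$ or $\delta(X) = -\infty$, this gives log canonicality if and only if $D_V \ge p-1$.

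For canonical: if $D_V < p-1$ then $X$ is not even log canonical, and if $D_V = p-1$ then \eqref{eq:intro-3} of Theorem \ref{thm:log-term} gives $\delta(X) \le -1$, so $X$ is not canonical. Conversely, if $D_V \ge p$ then \eqref{eq:intro-4} of the same theorem gives $\delta(X) \ge 2D_V/p - 2 \ge 0$, so $X$ is canonical. For terminal: if $D_V \le p$ then \eqref{eq:intro-3} forces $\delta(X) \le 0$, so $X$ is not terminal; if $D_V \ge p+1$ then \eqref{eq:intro-4} gives $\delta(X) \ge 2(p+1)/p - 2 = 2/p > 0$, and since $\delta(X)$ is an integer (because $X$ is factorial), we conclude $\delta(X) \ge 1$ and $X$ is terminal.

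Since both theorems are available, there is no real obstacle; the only mildly delicate point is the boundary case $D_V = p-1$ for the canonical criterion, where one must invoke \eqref{eq:intro-3} rather than \eqref{eq:intro-4} (the latter requires the hypothesis $D_V \ge p$), and the boundary case $D_V = p+1$ for terminality, where the integrality of $\delta(X)$ is what promotes the strict inequality $\delta(X) \ge 2/p$ to $\delta(X) \ge 1$.
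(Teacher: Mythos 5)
Your derivation is correct and is exactly the ``direct consequence'' the paper has in mind (the paper gives no separate proof of the corollary, presenting it as immediate from Theorems \ref{thm:precise} and \ref{thm:log-term}): the upper bound \eqref{eq:intro-3} rules out the boundary cases and the lower bound \eqref{eq:intro-4} establishes the positive direction, with the log canonical case read off from the first assertion of Theorem \ref{thm:precise}. The only superfluous step is invoking integrality of $\delta(X)$ for terminality when $D_{V}\ge p+1$: since terminal is defined by $\delta(X)>0$, the bound $\delta(X)\ge 2/p>0$ already suffices.
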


For instances, in the cases where $V=V_{2}^{\oplus3}$ in characteristic
two and $V=V_{4}$ in characteristic five, the quotient variety $X$
is terminal but not Cohen-Macaulay. The dimension four is the smallest
possible, because we consider linear actions and the fixed-point locus
$V^{G}$ is always of positive dimension. Note that Totaro \cite[Cor. 2.2]{Totaro:2017bk}
had earlier announced construction of terminal but not Cohen-Macaulay
singularities by using homogeneous varieties with nonreduced stabilizers.
These singularities have dimension $\ge8$. The current work was motivated
by his result. Later, inspired by construction of the author, Totaro
constructed an example of a three-dimensional terminal but not Cohen-Macaulay
singularity in characteristic two, by considering a non-linear action
of $\ZZ/2$ (see \cite[Theorem 5.1]{Totaro:2017bk}).

In the next section, we review basics of motivic integration and give
an expression of $\delta(X)$ in terms of motivic integration (Proposition
\ref{prop:precise-1}). In Section \ref{sec:Proof-of-Theorem}, we
prove Theorems \ref{thm:precise} and \ref{thm:log-term}.

The essential part of this work was done during a workshop held at
Mathematisches Forschungsinstitut Oberwolfach on September, 2017.
The author thanks the organizers of the workshop and the host institute.
He also thanks Yoshinori Gongyo, Shihoko Ishii, Sándor Kovács, Burt
Totaro and Shunsuke Takagi for helpful conversation and information.

\section{Motivic integration}

In this section, after briefly recalling basics of motivic integration,
we prove a result expressing $\delta(X)$ in terms of motivic integration,
Proposition \ref{prop:precise-1}, which will be necessary in Section
\ref{sec:Proof-of-Theorem}.

We denote by $X$ an irreducible variety of dimension $d$ over $k$.
We denote the arc space of $X$ by $J_{\infty}X$. This space has
the motivic measure denoted by $\mu_{X}$. We let the motivic measure
take values in the ring denoted by $\hat{\cM}'$ in \cite{MR3230848},
a version of the completed Grothendieck ring of varieties. The element
of $\hat{\cM}'$ defined by a variety $Z$ is denoted by $[Z]$. We
write the special element $[\AA_{k}^{1}]$ as $\LL$, which is invertible
by construction. There exists a ring homomorphism $P\colon\hat{\cM'}\to\ZZ((T^{-1}))$
which sends $[Z]$ to the Poincaré polynomial of $Z$ (see \emph{ibid.},
pp.\,1141-1142). We define the \emph{degree} of $\alpha=\sum a_{i}T^{i}\in\ZZ((T^{-1}))$
by $\deg\alpha:=\sup\{i\mid a_{i}\ne0\}$. For $a\in\hat{\cM}'$,
we define its \emph{dimension }as $\dim a:=\frac{1}{2}\deg P(a)$
so that for a variety $Z$ and an integer $n$, we have $\dim[Z]\LL^{n}=\dim Z+n$. 

For $n\in\NN$, let $\pi_{n}\colon J_{\infty}X\to J_{n}X$ be the
truncation map to $n$-jets. A subset $U\subset J_{\infty}X$ is called
\emph{stable }if there exists $n\in\NN$ such that $\pi_{n}(U)$ is
a constructible subset of $J_{n}X$, $U=\pi_{n}^{-1}\pi_{n}(U)$ and
for every $n'\ge n$, the map $\pi_{n'+1}(U)\to\pi_{n'}(U)$ is a
piecewise trivial $\AA^{d}$-fibration. The measure $\mu_{X}(U)$
of a stable subset $U$ is defined to be $[\pi_{n}(U)]\LL^{-nd}$
for $n\gg0$. A \emph{measurable subset} of $J_{\infty}X$ is a subset
approximated by a sequence of stable subsets. The measure of a measurable
subset is defined as the limit of ones of stable subsets (for details,
see \cite[Appendix]{MR1905024}, \cite[Section 6]{MR2075915}).

In what follows, we assume that $X$ is normal and the canonical sheaf
$\omega_{X}$ is invertible. The \emph{$\omega$-Jacobian ideal} $\cJ_{X}\subset\cO_{X}$
is then defined by $\cJ_{X}\omega_{X}=\mathrm{Im}\left(\bigwedge^{d}\Omega_{X/k}\to\omega_{X}\right).$
For an admissible modification $f\colon Y\to X$, let $f_{\infty}\colon J_{\infty}Y\to J_{\infty}X$
be the induced map of arc spaces. For a measurable subset $U\subset J_{\infty}Y_{\sm}\subset J_{\infty}Y$,
where $Y_{\sm}$ denotes the smooth locus, we have the \emph{change
of variables formula},
\[
\int_{U}\LL^{-\ord K_{Y/X}}\,d\mu_{Y}=\int_{f_{\infty}(U)}\LL^{\ord\cJ_{X}}\,d\mu_{X}.
\]
Here $\ord?$ denotes the order function associated to a divisor or
an ideal sheaf. The proofs of our main results are based on repeated
evaluation of dimensions of integrals as above. For this reason, we
introduce the following notation: For a measurable subset $U$ of
$J_{\infty}X$ (resp. $J_{\infty}Y_{\sm}$), we define
\[
\nu(U):=\dim\left(\int_{U}\LL^{\ord\cJ_{X}}\,d\mu_{X}\right)\quad\left(\text{resp. }\nu(U):=\dim\left(\int_{U}\LL^{-\ord K_{Y/X}}\,d\mu_{Y}\right)\right),
\]
provided that the integral converges. We say that a measurable subset
$U$ of $J_{\infty}X$ or $J_{\infty}Y_{\sm}$ is \emph{small }if
the relevant integral converges. When $U\subset J_{\infty}Y_{\sm}$,
we have $\nu(U)=\nu(f_{\infty}(U))$. 

In what follows, we write $A=_{\ae}B$ (resp. $A\subset_{\ae}B$)
to mean that there exists a measure zero subset $C$ such that $A\setminus C=B\setminus C$
(resp. $A\setminus C\subset B\setminus C$); ``a.e.'' stands for
``almost everywhere''. We also denote by $\pi_{X}$ the truncation
map $J_{\infty}X\to J_{0}X=X$ and similarly for $\pi_{Y}$. 
\begin{prop}
\label{prop:precise-1}Let $C_{r}\subset\pi_{X}^{-1}(X_{\sing})$,
$r\in\NN$ be a countable collection of small measurable subsets such
that $\pi_{X}^{-1}(X_{\sing})=_{\ae}\bigcup_{r\in\NN}C_{r}$. Then
\begin{align*}
\delta(X) & =d-1-\sup_{r}\nu(C_{r}).
\end{align*}
\end{prop}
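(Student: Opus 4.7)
The plan is to establish the two inequalities $\sup_r \nu(C_r)\ge d-1-\delta(X)$ and $\sup_r \nu(C_r)\le d-1-\delta(X)$ separately, using the change of variables formula for motivic integration against a well-chosen admissible modification.

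For the lower bound, I would produce, for each admissible $f\colon Y\to X$ and each $i\in\cS_f$, an explicit cylinder in the arc space of $Y$ whose dimension realises $d-1-a_i$. Concretely, let $E_i^{0}\subset E_i\cap Y_{\sm}$ be a dense open, disjoint from the other exceptional components, and consider
\[
U:=\pi_Y^{-1}(E_i^{0})\cap\{\ord_{E_i}=1\}\subset J_\infty Y_{\sm}.
\]
A direct jet-count gives $\mu_Y(U)=[E_i^{0}](\LL-1)\LL^{-1}$, so $\dim\mu_Y(U)=d-1$, and since $\ord K_{Y/X}\equiv a_i$ on $U$ the change of variables formula yields $\nu(f_\infty(U))=d-1-a_i$. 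Because $f_\infty(U)\subset\pi_X^{-1}(X_{\sing})$, the countable decomposition $\pi_X^{-1}(X_{\sing})=_{\ae}\bigcup_r C_r$ together with $\sigma$-additivity of $\mu_X$ implies that some $C_r$ captures the maximal dimension, so $\sup_r\nu(C_r)\ge\nu(f_\infty(U))=d-1-a_i$. Taking the infimum over all admissible $f$ and $i\in\cS_f$ gives $\sup_r\nu(C_r)\ge d-1-\delta(X)$; the case $\delta(X)=-\infty$ is handled by letting $a_i\to-\infty$.

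For the upper bound, I would fix a single admissible $f\colon Y\to X$ and, for each $r$, pull the integral back to $Y$ via change of variables:
\[
\int_{C_r}\LL^{\ord\cJ_X}\,d\mu_X=\int_{f_\infty^{-1}(C_r)\cap J_\infty Y_{\sm}}\LL^{-\ord K_{Y/X}}\,d\mu_Y.
\]
On $J_\infty Y_{\sm}\cap\pi_Y^{-1}(f^{-1}(X_{\sing}))$ almost every arc belongs to a unique stratum $U_{i,m}:=\pi_Y^{-1}(E_i^{0})\cap\{\ord_{E_i}=m\}$ with $i\in\cS_f$ and $m\ge 1$, on which $\ord K_{Y/X}=a_im$. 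Each piece of the integral therefore has dimension at most $\dim\mu_Y(U_{i,m})-a_im=d-m(1+a_i)$. Summing (using that smallness of $C_r$ forces convergence of the motivic sum, so that the dimension equals the maximum of summands' dimensions), I obtain $\nu(C_r)\le\max_{i,m}(d-m(1+a_i))$. Since the infimum defining $\delta(X)$ can be approximated by choosing $f$ with $\min_{i\in\cS_f}a_i$ arbitrarily close to $\delta(X)$, this gives $\nu(C_r)\le d-1-\delta(X)$ for every $r$.

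The main obstacle is measure-theoretic bookkeeping around the singular locus of $Y$: arcs in $\pi_X^{-1}(X_{\sing})$ whose only lifts to $Y$ land in $Y_{\sing}$ are not seen by the change of variables formula on $J_\infty Y_{\sm}$, so to make the upper bound sharp one has to pass to (or approximate by) a modification whose relevant valuations are all divisorial. The smallness hypothesis on each $C_r$ is crucial for controlling the log-canonical boundary case $a_i=-1$, where the integrand $\LL^{-a_i\ord_{E_i}}$ does not decay and only smallness guarantees that $\dim\mu_Y(U_{i,m}\cap f_\infty^{-1}(C_r))$ decreases fast enough in $m$. The case $\delta(X)=-\infty$ requires no separate argument: the lower-bound construction above produces cylinders of arbitrarily large $\nu$, so $\sup_r\nu(C_r)=+\infty$, matching the convention.
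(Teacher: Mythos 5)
Your first inequality, $\delta(X)\ge d-1-\sup_{r}\nu(C_{r})$, is essentially the paper's argument: compute $\nu(N_{i,1}^{f})=d-1-a_{i}$ for the contact-order-one cylinder over $E_{i}^{\circ}$, note that its image lies in $\pi_{X}^{-1}(X_{\sing})$ for $i\in\cS_{f}$, and use countable additivity of the motivic measure (the paper's Lemma \ref{lem:ineq}) to compare with the $C_{r}$. That half is fine, including the $\delta(X)=-\infty$ case.

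The other inequality has a genuine gap, and you have in fact put your finger on exactly where it is without closing it. Fixing a \emph{single} admissible modification $f$ and stratifying $f_{\infty}^{-1}(C_{r})\cap J_{\infty}Y_{\sm}$ by the sets $U_{i,m}$ does not account for the arcs in $C_{r}$ whose lifts to $Y$ are centered in $Y_{\sing}$, in $E_{i,\sing}$, or in intersections $E_{i}\cap E_{j}$; these loci have codimension $\ge 2$ in $Y$ but the corresponding sets of arcs are \emph{not} of measure zero, so they can carry the dimension of the integral and your bound $\nu(C_{r})\le\max_{i,m}\bigl(d-m(1+a_{i})\bigr)$ does not follow. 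In characteristic zero one would discharge this by passing to a log resolution, but that is precisely what is unavailable here, and "pass to (or approximate by) a modification whose relevant valuations are all divisorial" is not an argument: no single modification works, and it is not clear a priori that countably many suffice. The paper's substitute is Reguera's theorem (\cite[Prop. 3.7(vii)]{MR2503985}) that every stable point of $J_{\infty}X$ determines a divisorial valuation; this is used in Lemma \ref{lem:almost-everywhere} to produce, by a dimension-decreasing induction, a \emph{countable} family $\fM_{0}$ of admissible modifications with $\pi_{X}^{-1}(X_{\sing})=_{\ae}\bigcup_{f\in\fM_{0},i\in\cS_{f},b>0}f_{\infty}(N_{i,b}^{f})$, after which Lemma \ref{lem:ineq} gives $\sup_{r}\nu(C_{r})=\sup_{f,i,b}\nu(N_{i,b}^{f})$ and Lemma \ref{lem:delta} concludes. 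Without this covering statement (or some equivalent input) your upper bound is not established. A minor additional remark: once the covering is in hand, you do not need to "approximate $\delta(X)$" by well-chosen $f$ in this direction, since $\min_{i\in\cS_{f}}a_{i}\ge\delta(X)$ already gives the inequality with the correct sign for every $f$ in the covering family.
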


The corresponding result in characteristic zero would be well-known
to specialists and is an easy consequence of the existence of log
resolution, the change of variables formula and explicit computation
of $\int_{U}\LL^{-\ord K_{Y/X}}$ for some small measurable subsets
$U\subset J_{\infty}Y$. Since we work in positive characteristic,
we do not know the existence of log resolution. However a result of
Reguera \cite[Prop. 3.7(vii)]{MR2503985}, that every stable point
determines a divisorial valuation, can take the place of the existence
of log resolution. Indeed similar results for MJ-discrepancies have
been proved by using it \cite[Th. 3.18]{MR3751297}. The rest of this
section is devoted to the proof of the above proposition, which uses
only standard arguments. We first prove a few auxiliary results.
\begin{lem}
\label{lem:ineq}Let $C,C_{i},D_{i}$, $i\in\NN$ be small measurable
subsets of $J_{\infty}X$.
\begin{enumerate}
\item If $C\subset_{\ae}\bigcup_{i\in\NN}D_{i}$, then $\nu(C)\le\sup_{i\in\NN}\nu(D_{i})$.
\item If $\bigcup_{i\in\NN}C_{i}=_{\ae}\bigcup_{i\in\NN}D_{i}$, then $\sup_{i\in\NN}\nu(C_{i})=\sup_{i\in\NN}\nu(D_{i}).$
\end{enumerate}
\end{lem}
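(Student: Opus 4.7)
The plan is to reduce (2) to (1) by symmetry, and to prove (1) by disjointifying the cover and applying countable additivity of motivic integrals together with the key property that the dimension function on $\hat{\cM}'$ sends a convergent countable sum to the supremum of the dimensions of its terms.

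For (1), the first step is to absorb the ``almost everywhere'' ambiguity: replacing $C$ by $C \cap \bigcup_{i \in \NN} D_i$ changes it only by a subset of a measure zero set, which affects neither $\nu(C)$ nor the smallness hypothesis. So we may assume $C \subset \bigcup_i D_i$ outright. Next, I would form the standard disjoint decomposition $E_i := (C \cap D_i) \setminus \bigcup_{j < i} D_j$, so that $C = \bigsqcup_{i \in \NN} E_i$ with each $E_i$ measurable and contained in $D_i$. Since $D_i$ is small and the integrand $\LL^{\ord \cJ_X}$ is a non-negative power of $\LL$, monotonicity forces each $E_i$ to be small with $\nu(E_i) \le \nu(D_i)$.

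Next, I would invoke countable additivity of the motivic measure along this disjoint decomposition to write
\[
\int_C \LL^{\ord \cJ_X} \, d\mu_X \;=\; \sum_{i \in \NN} \int_{E_i} \LL^{\ord \cJ_X} \, d\mu_X
\]
in $\hat{\cM}'$, then apply $\dim$ and the sup property termwise to conclude $\nu(C) = \sup_i \nu(E_i) \le \sup_i \nu(D_i)$. Part (2) then follows by applying (1) with $C = C_j$ and the cover $\{D_i\}$ (valid because $C_j \subset \bigcup_i C_i =_{\ae} \bigcup_i D_i$) to get $\nu(C_j) \le \sup_i \nu(D_i)$; taking the supremum over $j$ gives one inequality, and the reverse follows by interchanging the roles of the two collections.

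The main (and essentially only) obstacle is verifying the two abstract properties invoked above: countable additivity of motivic integrals against non-negative integrands, and the compatibility of $\dim$ with convergent countable sums in $\hat{\cM}'$. Both are built into the construction of $\hat{\cM}'$ and $\mu_X$ via the topology induced by $\dim$ (equivalently, via the Poincaré polynomial homomorphism $P$), and so amount to routine bookkeeping using the general properties of motivic integration reviewed in the references cited earlier in this section.
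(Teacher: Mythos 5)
Your proposal is correct and follows essentially the same route as the paper: the paper disjointifies via $D_i' := D_i \setminus \bigcup_{j<i} D_j$ and writes $\int_C = \sum_i \int_{C \cap D_i'}$, where $C \cap D_i'$ coincides with your $E_i$, then uses the same sup-of-dimensions property and the same reduction of (2) to (1) by symmetry. No substantive differences.
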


\begin{proof}
For the first assertion, let $D_{i}':=D_{i}\setminus\bigcup_{j<i}D_{j}$.
Then the sets $D_{i}'$ are mutually disjoint and $\bigcup_{i}D_{i}'=\bigcup_{i}D_{i}$.
Therefore 
\[
\int_{C}\LL^{\ord\cJ_{X}}\,d\mu_{X}=\sum_{i\in\NN}\int_{C\cap D_{i}'}\LL^{\ord\cJ_{X}}\,d\mu_{X}.
\]
It follows that
\[
\nu(C)=\sup_{i\in\NN}\nu(C\cap D_{i}')\le\sup_{i\in\NN}\nu(D_{i}')\le\sup_{i\in\NN}\nu(D_{i}).
\]
Thus the first assertion holds. For the second assertion, we apply
the first assertion to $C=C_{i}$ for each $i$ to get
\[
\nu(C_{i})\le\sup_{j\in\NN}\nu(D_{j}).
\]
This shows $\sup_{i}\nu(C_{i})\le\sup_{i}\nu(D_{i})$. The opposite
inequality is proved by the same argument. 
\end{proof}
For $i\in\cE_{f}$ and an integer $b>0$, we define 
\begin{align*}
E_{i}^{\circ} & :=\left(E_{i,\sm}\cap Y_{\sm}\right)\setminus\bigcup_{j\in\cE_{f}\setminus\{i\}}E_{j}\subset Y,\\
N_{i,b} & :=\{\gamma\in\pi_{Y}^{-1}(E_{i}^{\circ})\mid\mathrm{ord}_{E_{i}}(\gamma)=b\}\subset J_{\infty}Y.
\end{align*}
To emphasize $f$, we also write $N_{i,b}$ as $N_{i,b}^{f}$. By
a standard computation of motivic integration, 
\[
\int_{N_{i,b}}\LL^{-\ord K_{Y/X}}\,d\mu_{Y}=\mu_{Y}(N_{i,b})\LL^{-a_{i}b}=[E_{i}^{\circ}](\LL-1)\LL^{-(1+a_{i})b}.
\]
In particular, 
\[
\nu(N_{i,b})=d-(1+a_{i})b.
\]
\begin{lem}
\label{lem:delta}Let $\fM$ be the set of (isomorphism classes of)
admissible modifications $f\colon Y\to X$. We have
\[
\delta(X)=d-1-\sup_{f\in\fM,i\in\cS_{f},b>0}\nu(N_{i,b}^{f}).
\]
\end{lem}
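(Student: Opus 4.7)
The plan is to combine the formula $\nu(N_{i,b}^f) = d - (1+a_i)b$ recorded just above the lemma with the definition $\delta(X) = \inf_f \min_{i \in \cS_f} a_i$. Concretely, the claimed identity is equivalent to
\[
\inf_{f,\, i \in \cS_f} a_i \;=\; d - 1 - \sup_{f,\, i \in \cS_f,\, b > 0}\bigl(d - (1+a_i)b\bigr),
\]
and I would verify this by analyzing how the inner supremum over the positive integer $b$ depends on the sign of $1 + a_i$.

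Suppose first that $a_i \ge -1$ for every admissible $f$ and every $i \in \cS_f$. Then $1 + a_i \ge 0$, so $b \mapsto d - (1+a_i)b$ is non-increasing on the positive integers and attains its maximum at $b = 1$ with value $d - 1 - a_i$. Taking the supremum over $f$ and $i$ converts this into $d - 1 - \inf_{f,\,i \in \cS_f} a_i = d - 1 - \delta(X)$, which rearranges to the lemma.

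In the remaining case $\delta(X) = -\infty$, the dichotomy $\delta(X) \ge -1$ or $\delta(X) = -\infty$ recalled in the introduction forces the existence of admissible $f$ and $i \in \cS_f$ with $a_i \le -2$. For such an $(f, i)$ we have $1 + a_i < 0$, so $b \mapsto d - (1+a_i)b$ is strictly increasing in $b$, and hence its supremum over $b > 0$ is $+\infty$. Consequently $\sup_{f, i, b} \nu(N_{i,b}^f) = +\infty$ and the right-hand side of the lemma evaluates to $-\infty = \delta(X)$.

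No substantial obstacle is expected: the entire content is the already-recorded formula for $\nu(N_{i,b}^f)$ together with a short manipulation of infima and suprema. The only point worth flagging is that $b$ ranges over positive integers, so one must note that the discrete optimum $b = 1$ is actually attained whenever $1 + a_i \ge 0$, and that the $-\infty$ branch is correctly produced by the same expression when $1 + a_i < 0$.
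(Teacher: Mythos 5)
Your proposal is correct and follows essentially the same route as the paper: the paper also splits into the log canonical case (where $1+a_i\ge 0$ forces the supremum over $b$ to be attained at $b=1$) and the non-log-canonical case (where both sides are $-\infty$), using the formula $\nu(N_{i,b}^f)=d-(1+a_i)b$. Your treatment of the $-\infty$ branch is just a slightly more explicit version of what the paper leaves implicit.
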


\begin{proof}
If $X$ is not log canonical, then the both sides are $-\infty$.
If $X$ is log canonical, since $1+a_{i}\ge0$, we have $\nu(N_{i,b}^{f})\le\nu(N_{i,1}^{f})$
for every $f\in\fM$, $i\in\cS_{f}$, $b>0$. Therefore
\begin{align*}
\delta(X) & =\inf_{f\in\fM,i\in\cS_{f}}a_{i}\\
 & =d-1-\sup_{f\in\fM,i\in\cS_{f}}\nu(N_{i,1}^{f})\\
 & =d-1-\sup_{f\in\fM,i\in\cS_{f},b>0}\nu(N_{i,b}^{f}).
\end{align*}
\end{proof}
\begin{lem}
\label{lem:almost-everywhere}There exists a countable set $\fM_{0}\subset\fM$
of admissible modifications of $X$ such that 
\[
\pi_{X}^{-1}(X_{\sing})=_{\ae}\bigcup_{f\in\fM_{0},\,i\in\cS_{f},\,b>0}f_{\infty}(N_{i,b}^{f}).
\]
\end{lem}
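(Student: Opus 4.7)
My plan is to parametrize an admissible modification family by divisorial valuations of $k(X)$ centered on $X_{\sing}$. The key input is Reguera's theorem \cite[Prop. 3.7(vii)]{MR2503985}: every stable point of $J_\infty X$ determines a divisorial valuation of $k(X)$. Since each $J_n X$ has only finitely many generic points of irreducible constructible subsets, the set of stable points of $J_\infty X$, and hence the set of resulting valuations, is countable. This countability is exactly what makes a countable $\fM_0$ possible.

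First I would reduce to arcs not lying entirely in $X_{\sing}$: since $\dim X_{\sing} < d$, the subset $J_\infty X_{\sing}$ has measure zero in $J_\infty X$, so it can be discarded. Each remaining arc $\gamma \in \pi_X^{-1}(X_{\sing}) \setminus J_\infty X_{\sing}$ specializes to some stable point $\eta \in J_\infty X$, yielding a divisorial valuation $v_\eta$ whose center on $X$ lies in $X_{\sing}$ (its center is the image of $\pi_X(\eta)$). Enumerate the resulting countable collection of valuations as $\{v_r\}_{r \in \NN}$.

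Next, for each $r$ I would construct an admissible modification $f_r \colon Y_r \to X$ on which $v_r$ is realized as an irreducible exceptional divisor $E_{i_r} \subset \Exc{f_r}$. A normal proper birational model on which $v_r$ is divisorial exists by standard valuation theory (e.g.\ Zariski's theorem on birational dominations), and further blowups plus normalization ensure the two pure-dimensionality conditions making $f_r$ admissible. Since the center of $v_r$ is contained in $X_{\sing}$, the divisor $E_{i_r}$ is contained in $f_r^{-1}(X_{\sing})$, so $i_r \in \cS_{f_r}$. Define $\fM_0 := \{f_r\}_{r \in \NN}$.

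Finally, for the almost-everywhere coverage, let $\gamma \in \pi_X^{-1}(X_{\sing}) \setminus J_\infty X_{\sing}$, let $\eta$ be the stable point it specializes to, and set $v_r = v_\eta$. By normality of $Y_r$ and properness of $f_r$, the arc $\gamma$ lifts uniquely to $\tilde\gamma \in J_\infty Y_r$, and $\ord_{E_{i_r}}(\tilde\gamma) = v_r(\gamma) =: b \in \ZZ_{>0}$. Then $\tilde\gamma \in N_{i_r, b}^{f_r}$ provided $\tilde\gamma(0) \in E_{i_r}^{\circ}$; the complementary bad set, where $\tilde\gamma(0)$ lies in an intersection with another component of $\Exc{f_r}$, in $Y_{r, \sing}$, or in $E_{i_r, \sing}$, is the preimage of a proper closed subset of $E_{i_r}$, hence has measure zero in $J_\infty Y_r$, and its $f_{r,\infty}$-image has measure zero in $J_\infty X$ by the change-of-variables formula applied to $J_\infty Y_{r, \sm} \to J_\infty X$. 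The main obstacle is exactly this final measure-theoretic bookkeeping: verifying that the negligible set of arcs missed across the countable family remains negligible after push-forward, which hinges on Reguera's correspondence and the standard cylinder calculus recalled in the section.
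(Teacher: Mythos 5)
Your overall strategy --- attach divisorial valuations to stable points via Reguera's result, realize each on an admissible modification, and argue the leftover arcs are negligible --- is the same as the paper's. But the step carrying all the weight, the countability of the family $\fM_{0}$, rests on a false claim: it is not true that each $J_{n}X$ has only finitely many generic points of irreducible constructible subsets. Every irreducible closed subset of $J_{n}X$ is constructible, so over an uncountable field there are uncountably many such generic points; correspondingly, the set of stable points of $J_{\infty}X$ and the set of divisorial valuations of $k(X)$ are uncountable (already the fibers $\pi_{0}^{-1}(x)$ over the various points $x$, or the blowups at the various closed points of $X_{\sing}$ when $\dim X_{\sing}>0$, give uncountably many). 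So one cannot take $\fM_{0}$ to realize \emph{all} the valuations arising from stable points; one must select countably many that suffice for almost-everywhere coverage, and making that selection is precisely the content of the lemma.

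The paper achieves this in two steps. First, $\pi_{X}^{-1}(X_{\sing})$ is written as an a.e.\ countable union of measurable sets $C_{j}$ (the contact loci where the order along $X_{\sing}$ equals $j+1$). Second, for a fixed measurable $C$, one picks \emph{finitely many} stable subsets $D_{1},\dots,D_{m}\subset C$ with $\dim\mu_{X}\bigl(C\setminus\bigcup_{h}D_{h}\bigr)<\dim\mu_{X}(C)$, applies Reguera's result only to the generic points of these $D_{h}$ to produce finitely many admissible modifications, removes the corresponding sets $f_{\infty}(\pi_{Y}^{-1}(E_{i}^{\circ}))$ from $C$, and iterates; since $\dim\mu_{X}$ strictly drops at each stage, countably many iterations reduce $C$ to a measure-zero set. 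This descending induction on the dimension of the motivic measure is the ingredient missing from your argument. (Secondary, fixable points: an arc is a specialization of a relevant stable point only up to discarding a measure-zero set, and the unique lifting of $\gamma$ to $Y_{r}$ requires $\gamma\notin J_{\infty}(f_{r}(\Exc{f_{r}}))$, again a measure-zero condition; your final paragraph treats these correctly in spirit.)
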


\begin{proof}
The lemma follows from the following two facts:
\begin{enumerate}
\item There exist countably many measurable subsets $C_{j}\subset\pi_{X}^{-1}(X_{\sing})$,
$j\in\NN$ such that $\pi_{X}^{-1}(X_{\sing})=_{\ae}\bigcup_{j}C_{j}$. 
\item Every measurable subset $C\subset\pi_{X}^{-1}(X_{\sing})$ is almost
everywhere covered by $f_{\infty}(\pi_{Y}^{-1}(E_{i}^{\circ}))$ for
countably many $f$'s. 
\end{enumerate}
For the first fact, we can for instance take $C_{j}$ to be the locus
of arcs having order $j+1$ along $X_{\sing}$ (\cite[Lemma 4.1]{MR1664700},
\cite[Lemme 4.5.4]{MR2075915}). For the second one, we need a result
of Reguera \cite[Prop. 3.7(vii)]{MR2503985} that every stable point
of $J_{\infty}X$ determines a divisorial valuation on the function
field, which means that for every irreducible stable subset $C\subset\pi_{X}^{-1}(X_{\sing})$,
there exist an admissible modification $f\colon Y\to X$ and $i\in\cS_{f}$
such that $f_{\infty}(\pi_{Y}^{-1}(E_{i}^{\circ}))$ contains the
generic point of $C$. Let us take stable subsets $D_{1},\dots,D_{m}\subset C$
such that 
\[
\dim\mu_{X}\left(C\setminus\bigcup_{h=1}^{m}D_{h}\right)<\dim\mu_{X}(C).
\]
Applying Reguera's result to the generic points of $D_{h}$, we can
make $\dim\mu_{X}(C)$ strictly smaller by removing finitely many
subsets of the form $f_{\infty}(\pi_{Y}^{-1}(E_{i}^{\circ}))$ from
$C$. By an inductive argument, we get a measure zero subset by removing
countably many subsets of the same form, equivalently $C$ is almost
everywhere covered by countably subsets of this form. Since each subset
of the form $f_{\infty}(\pi_{Y}^{-1}(E_{i}^{\circ}))$ is almost everywhere
covered by countably many subsets of the form $f_{\infty}(N_{i,b}^{f})$,
the second fact above holds. 
\end{proof}
\begin{proof}[Proof of Proposition \ref{prop:precise-1}]
For any admissible modification $f$ and $i\in\cS_{f}$, since $f_{\infty}(N_{i,1}^{f})\subset_{\ae}\bigcup_{r}C_{r}$,
from Lemma \ref{lem:ineq}, we have
\[
a_{i}=d-1-\nu(N_{i,1})\ge d-1-\sup_{r}\nu(C_{r}).
\]
Thus $\delta(X)\ge d-1-\sup_{r}\nu(C_{r})$. 

Let $\fM_{0}$ be a countable set of admissible modifications of $X$
as in Lemma \ref{lem:almost-everywhere}. Then 
\[
\bigcup_{r}C_{r}=_{\ae}\bigcup_{f\in\fM_{0},i\in\cS_{f},b>0}f_{\infty}(N_{i,b}^{f})
\]
and from Lemma \ref{lem:ineq},
\[
\sup_{r}\nu(C_{r})=\sup_{f\in\fM_{0},i\in\cS_{f},b>0}\nu(N_{i,b}^{f}).
\]
This equality together with Lemma \ref{lem:delta} shows
\begin{align*}
\delta(X) & =d-1-\sup_{f\in\fM,i\in\cS_{f},b>0}\nu(N_{i,b}^{f})\\
 & \le d-1-\sup_{f\in\fM_{0},i\in\cS_{f},b>0}\nu(N_{i,b}^{f})\\
 & =d-1-\sup_{r}\nu(C_{r}).
\end{align*}
\end{proof}

\section{Proof of Theorems \ref{thm:precise} and \ref{thm:log-term}\label{sec:Proof-of-Theorem}}

In this section, we prove Theorems \ref{thm:precise} and \ref{thm:log-term},
applying Proposition \ref{prop:precise-1} to a special choice of
small measurable subsets for which the values of $\nu$ were explicitly
computed in \cite{MR3230848}. 

Let $V=\AA_{k}^{d}$ and $X=V/G$ be as in Introduction. We assume
$D_{V}\ge2$. 
\begin{prop}
\label{prop:M_j}Let $J:=\{j\in\ZZ\mid j>0,\,p\nmid j\}\cup\{0\}$.
There exist measurable subsets $M_{j}\subset\pi_{X}^{-1}(X_{\sing})$,
$j\in J$ such that $\bigcup_{j\in J}M_{j}=_{\ae}\pi_{X}^{-1}(X_{\sing})$
and
\[
\int_{M_{j}}\LL^{\ord\cJ_{X}}\,d\mu_{X}=\begin{cases}
(\LL-1)\LL^{l+j-1-\lfloor j/p\rfloor-\sht_{V}(j)} & (j>0)\\
\LL^{l} & (j=0).
\end{cases}
\]
\end{prop}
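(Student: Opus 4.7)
The plan is to follow the wild McKay framework of \cite{MR3230848}, stratifying the arcs in $\pi_X^{-1}(X_{\sing})$ according to the ramification of the $G$-cover $V\to X$ pulled back along each arc. For a generic arc $\gamma\colon\Spec k[[t]]\to X$ whose closed point lies in $X_{\sing}$ but which meets $X_{\sm}$ for $t\ne0$, the pullback $V\times_X\Spec k((t))$ is an \'etale $G$-torsor over $\Spec k((t))$. Since $G=\ZZ/p$ and $\mathrm{char}\,k=p$, this torsor is classified by Artin--Schreier theory and carries a well-defined Swan conductor $j$, which is either $0$ (the torsor is trivial, equivalently $\gamma$ factors through $X_{\sing}$) or a positive integer coprime to $p$. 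I would define $M_j$ to be the locus of arcs whose associated torsor has Swan conductor $j$, with $M_0$ consisting of those arcs contained in the arc space of $X_{\sing}$.

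The first step is to verify that each $M_j$ is measurable and that $\bigcup_{j\in J}M_j=_{\ae}\pi_X^{-1}(X_{\sing})$. Measurability follows from expressing $M_j$ as a countable union of cylinders over constructible subsets of suitable jet schemes of $X$, using an explicit presentation of the quotient map in coordinates adapted to the decomposition $V=\bigoplus V_{d_\lambda}$. The a.e.\ cover statement holds because arcs centered on $X_{\sing}$ but not generically inside $X_{\sing}$ automatically admit a well-defined Swan conductor, while the remaining arcs lie in the arc space of $X_{\sing}$ up to a measure zero subset and are captured by $M_0$.

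The second step is to evaluate $\int_{M_j}\LL^{\ord\cJ_X}\,d\mu_X$. For $j>0$, I would lift arcs in $M_j$ to $G$-equivariant arcs $\tilde\gamma\colon\Spec\tilde{\cO}_j\to V$, where $\tilde{\cO}_j$ is the integral closure of $k[[t]]$ in a fixed Artin--Schreier extension of Swan conductor $j$. Applying the wild change of variables of \cite{MR3230848} transports the integral over $M_j$ into a motivic integral over the space of such equivariant lifts, with a Jacobian weight $\LL^{-\sht_V(j)}$ coming from the valuations of the differentials of a $G$-equivariant basis on each summand $V_{d_\lambda}$; a direct calculation on each summand gives exactly $\sum_{i=1}^{d_\lambda-1}\lfloor ij/p\rfloor$, matching the definition of $\sht_V(j)$. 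The space of equivariant lifts then factors as arcs on $V^G$ (contributing $\LL^l$), a choice of Artin--Schreier parameter modulo $t$-powers (contributing $\LL^{j-1-\lfloor j/p\rfloor}$), and a residual unit (contributing $\LL-1$), yielding the claimed formula. For $j=0$ a direct computation on the arc space of $X_{\sing}$ gives $\LL^l$.

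The main obstacle is identifying $\ord\cJ_X$ along a generic arc in $M_j$ with the combinatorial quantity $\sht_V(j)$, together with pinning down the exact motivic measure of the space of $G$-equivariant lifts of conductor $j$. Both are essentially local computations in Artin--Schreier coordinates; the former rests on the explicit form of the $\omega$-Jacobian of the quotient map and the latter on counting Artin--Schreier parameters modulo the relevant level structure. These are the technical heart of \cite{MR3230848}, and transcribing them into the present notation is what the proof comes down to.
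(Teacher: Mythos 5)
Your proposal is correct and follows essentially the same route as the paper: the paper also stratifies $\pi_X^{-1}(X_{\sing})$ by the ramification jump (your Swan conductor) of the induced $G$-cover, taking $M_j$ to be the image of the space of twisted arcs $\cJ_{\infty,j}\cX$ of \cite{MR3230848}, and likewise derives the integral formula from the change-of-variables theorem and the computation in the proof of Prop.~6.9 of that reference, with the a.e.\ covering coming from the almost-bijectivity of $\bigsqcup_j\cJ_{\infty,j}\cX\to J_\infty X$. The only detail worth adding is the paper's observation that for $j>0$ the closed point of $\Spec\cO_L$ necessarily lands in $V^G$, so the whole stratum automatically sits inside $\pi_X^{-1}(X_{\sing})$, while for $j=0$ one must intersect with $\pi_X^{-1}(X_{\sing})$ by hand.
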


\begin{proof}
This follows from computation in Proof of \cite[Prop. 6.9]{MR3230848}
and a version of the change of variables formula, \emph{ibid.}, Theorem
5.20. For $j>0$, we take $M_{j}$ to be the image of $\cJ_{\infty,j}\cX$,
the space of twisted arcs with ramification jump $j$ (for the definition,
see \emph{ibid.}, Definition 3.11). A point of $\cJ_{\infty,j}\cX$
corresponds to a $G$-equivariant morphism $\Spec\cO_{L}\to V$, where
$L$ is a Galois extension of $k((t))$ with Galois group $G$ and
ramification jump $j$, and $\cO_{L}$ is the integral closure of
$k[[t]]$ in $L$. The closed point of $\Spec\cO_{L}$ maps into $V^{G}$.
This implies $M_{j}\subset\pi_{X}^{-1}(X_{\sing})$. For $j=0$, we
take $M_{0}$ to be the intersection of the image of $\cJ_{\infty,0}\cX$
and $\pi_{X}^{-1}(X_{\sing})$. Since the map $\bigsqcup_{j\in J}\cJ_{\infty,j}\cX\to J_{\infty}X$
is almost bijective (\emph{ibid.}, Proposition 3.17), these sets $M_{j}$
almost cover $\pi_{X}^{-1}(X_{\sing})$. 
\end{proof}
For a positive integer $j$ with $p\nmid j$, we write $j=np+s$ with
$n\ge0$ and $1\le s\le p-1$. Since $\sht_{V}(j)=D_{V}n+\sht_{V}(s)$,
we have
\begin{align}
\nu(M_{j}) & =l+(p-1-D_{V})n+s-\sht_{V}(s).\label{eq:M_j-3}
\end{align}
Note that since $\sht_{V}(1)=0$, 
\begin{equation}
\nu(M_{1})=l+1>l=\nu(M_{0}).\label{eq:M_1}
\end{equation}
 
\begin{lem}
\label{lem:sht}We have
\[
s-\sht_{V}(s)=\sht_{V}(p-s)+s+d-l-D_{V}.
\]
\end{lem}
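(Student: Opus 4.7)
The plan is to reduce the identity to the clean form
\[
\sht_V(s)+\sht_V(p-s)=D_V-(d-l),
\]
which is what remains after cancelling $s$ from both sides and rearranging. Once in this form, the identity depends only on the numerical invariants of the representation $V=\bigoplus_\lambda V_{d_\lambda}$ and on elementary arithmetic of floor functions modulo $p$.

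First I would expand the left-hand side according to the definition of $\sht_V$:
\[
\sht_V(s)+\sht_V(p-s)=\sum_{\lambda=1}^{l}\sum_{i=1}^{d_\lambda-1}\left(\left\lfloor\frac{is}{p}\right\rfloor+\left\lfloor\frac{i(p-s)}{p}\right\rfloor\right).
\]
Next I would use the rewriting $i(p-s)/p=i-is/p$ to get
\[
\left\lfloor\frac{is}{p}\right\rfloor+\left\lfloor\frac{i(p-s)}{p}\right\rfloor=i+\left\lfloor\frac{is}{p}\right\rfloor+\left\lfloor-\frac{is}{p}\right\rfloor.
\]
Since $1\le s\le p-1$ and $1\le i\le d_\lambda-1\le p-1$ and $p$ is prime, the product $is$ is never divisible by $p$, so $is/p\notin\ZZ$ and hence $\lfloor is/p\rfloor+\lfloor-is/p\rfloor=-1$. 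Therefore each summand equals $i-1$.

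Summing up yields
\[
\sht_V(s)+\sht_V(p-s)=\sum_{\lambda=1}^{l}\sum_{i=1}^{d_\lambda-1}(i-1)=\sum_{\lambda=1}^{l}\frac{(d_\lambda-1)(d_\lambda-2)}{2}.
\]
Finally, using $D_V=\sum_\lambda\tfrac{d_\lambda(d_\lambda-1)}{2}$ and $d-l=\sum_\lambda(d_\lambda-1)$, one checks
\[
D_V-(d-l)=\sum_\lambda\frac{(d_\lambda-1)(d_\lambda-2)}{2},
\]
which matches the sum above and completes the proof. There is no real obstacle here; the only point requiring a moment of care is verifying that $p\nmid is$ for all relevant $i,s$, which uses both the primality of $p$ and the bound $d_\lambda\le p$ (so $i\le p-1$).
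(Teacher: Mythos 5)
Your proof is correct and uses essentially the same computation as the paper: both arguments hinge on writing $\left\lfloor i(p-s)/p\right\rfloor = i+\left\lfloor -is/p\right\rfloor$ and then using $\left\lfloor x\right\rfloor +\left\lfloor -x\right\rfloor =-1$ for non-integral $x$, which requires exactly the observation you make that $p\nmid is$. Your reorganization into the symmetric form $\sht_{V}(s)+\sht_{V}(p-s)=D_{V}-(d-l)$ is a cosmetic difference only.
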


\begin{proof}
The proof here is taken from \cite[Proof of Prop. 6.36]{MR3230848}.
We have
\begin{align*}
 & \sht_{V}(p-s)+s+d-l-D_{V}\\
 & =s+\left(\sum_{\lambda=1}^{l}\sum_{i=1}^{d_{\lambda}-1}i+\left\lfloor -\frac{is}{p}\right\rfloor \right)+d-l-D_{V}\\
 & =s+\left(\sum_{\lambda=1}^{l}\sum_{i=1}^{d_{\lambda}-1}-\left\lfloor \frac{is}{p}\right\rfloor -1\right)+d-l\\
 & =s-\sht_{V}(s).
\end{align*}
\end{proof}
\begin{lem}
\label{lem:sht-upper}We have
\[
\sht_{V}(s)\le\frac{(s-1)D_{V}}{p}.
\]
\end{lem}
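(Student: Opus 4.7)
The plan is to express $\sht_V(s)$ as a linear combination of $sD_V$ and a residue-sum, then bound the residue-sum from below by $D_V$. Concretely, I would write $\lfloor is/p\rfloor = (is - r_{i,\lambda})/p$, where $r_{i,\lambda} := (is \bmod p)$. Since $1 \le i \le d_\lambda - 1 \le p-1$ and $1 \le s \le p-1$, the product $is$ is not divisible by $p$, so $r_{i,\lambda} \in \{1,\dots,p-1\}$. Summing this expression and using $\sum_{\lambda}\sum_{i=1}^{d_\lambda-1} i = D_V$, I would obtain
\[
\sht_V(s) \;=\; \frac{s D_V - R(s)}{p}, \qquad R(s) := \sum_{\lambda=1}^{l}\sum_{i=1}^{d_\lambda-1} r_{i,\lambda}.
\]

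With this identity in hand, the inequality $\sht_V(s) \le (s-1)D_V/p$ is equivalent to $R(s) \ge D_V$, so the task reduces to bounding $R(s)$ from below. Fix $\lambda$. Since $s$ is a unit mod $p$ and the indices $i = 1,\dots,d_\lambda-1$ lie in $\{1,\dots,p-1\}$, the residues $r_{1,\lambda}, r_{2,\lambda}, \dots, r_{d_\lambda-1, \lambda}$ are $d_\lambda - 1$ \emph{distinct} nonzero elements of $\{1,2,\dots,p-1\}$. The sum of any $d_\lambda-1$ distinct elements of this set is at least the sum of the smallest ones, namely
\[
1 + 2 + \cdots + (d_\lambda - 1) = \frac{d_\lambda(d_\lambda - 1)}{2}.
\]
Summing this lower bound over $\lambda$ yields $R(s) \ge \sum_\lambda d_\lambda(d_\lambda-1)/2 = D_V$, and the lemma follows.

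There is no real obstacle; the only points requiring care are that $d_\lambda \le p$ (so $i < p$ and hence $is \not\equiv 0 \pmod p$) and that distinctness of the residues $r_{i,\lambda}$ uses invertibility of $s$ modulo $p$. Both are guaranteed by the standing hypotheses. I note in passing that equality $R(s) = D_V$ holds when $s=1$, which recovers the expected $\sht_V(1)=0$.
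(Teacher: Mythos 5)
Your proof is correct and is essentially the same argument as the paper's: the paper bounds the sum of the fractional parts $\frac{is}{p}-\lfloor\frac{is}{p}\rfloor$ below by $\frac{1}{p}\sum_{j=1}^{d_\lambda-1}j$ using exactly the same distinctness observation, which is your bound $R(s)\ge D_V$ written in terms of $r_{i,\lambda}/p$ instead of $r_{i,\lambda}$. No gaps; your remarks on $d_\lambda\le p$ and invertibility of $s$ are the same implicit justifications the paper relies on.
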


\begin{proof}
When $i$ varies from 1 to $d_{\lambda}-1$, the differences $\frac{is}{p}-\left\lfloor \frac{is}{p}\right\rfloor $
take $d_{\lambda}-1$ distinct values in $\{1/p,\dots,(p-1)/p\}$.
Therefore 
\[
\sum_{i=1}^{d_{\lambda}-1}\left\lfloor \frac{is}{p}\right\rfloor \le\sum_{i=1}^{d_{\lambda}-1}\frac{is}{p}-\sum_{j=1}^{d_{\lambda}-1}\frac{j}{p}=\frac{s-1}{p}\sum_{i=1}^{d_{\lambda}-1}i=\frac{(s-1)(d_{\lambda}-1)d_{\lambda}}{2p}.
\]
 The lemma follows by taking the sum over $\lambda$. 
\end{proof}
\begin{proof}[Proof of Theorem \ref{thm:precise}]
From Propositions \ref{prop:precise-1} and \ref{prop:M_j}, $\delta(X)=-\infty$
if and only if $\nu(M_{j})$ are not bounded above. In turn, from
(\ref{eq:M_j-3}), this is equivalent to that $D_{V}<p-1$, which
proves the first assertion. Equality (\ref{eq:intro-1}) follows again
from Propositions \ref{prop:precise-1} and \ref{prop:M_j} and from
formulas (\ref{eq:M_j-3}) and (\ref{eq:M_1}). Equality (\ref{eq:intro-2})
follows from Lemma \ref{lem:sht}.
\end{proof}
\begin{proof}[Proof of Theorem \ref{thm:log-term}]
If $\delta(X)=-\infty$, then (\ref{eq:intro-3}) is trivial. We
may suppose that $\delta(X)\ne-\infty$, which is, from Theorem \ref{thm:precise},
equivalent to $D_{V}\ge p-1$. Since 
\[
\sht_{V}(p-(p-1))+(p-1)=p-1,
\]
again from Theorem \ref{thm:precise}, we have 
\[
\delta(X)\le D_{V}-p.
\]

On the other hand, from Theorem \ref{thm:precise} and Lemma \ref{lem:sht-upper},
\begin{align*}
\delta(X) & =D_{V}-1-\max_{1\le s\le p-1}\{\sht_{V}(p-s)+s\}\\
 & \ge D_{V}-1-\max_{1\le s\le p-1}\left\{ \frac{p-s-1}{p}D_{V}+s\right\} \\
 & =\frac{D_{V}}{p}-1-\max_{1\le s\le p-1}\left\{ s\left(1-\frac{D_{V}}{p}\right)\right\} .
\end{align*}
If $D_{V}\ge p$, then 
\[
=\frac{D_{V}}{p}-1-\left(1-\frac{D_{V}}{p}\right)=\frac{2D_{V}}{p}-2.
\]
\end{proof}
\begin{rem}
The additive action of the fixed point part $V^{G}$ on $V$ commutes
with the $G$-action. Therefore $X$ as well as its jet schemes and
the arc space inherit the $V^{G}$-action. Using this structure, we
can show that for a closed subset $C\subset X_{\sing}$, $\nu(M_{j}\cap\pi_{X}^{-1}(C))=\nu(M_{j})-l+\dim C$
with $M_{j}$ as in Proposition \ref{prop:M_j}. By the same argument
as above, we can prove that if $D_{V}\ge p-1$, then
\[
\frac{2D_{V}}{p}-2+l-\dim C\le\discrep{\center\subset C;X}\le D_{V}-p+l-\dim C.
\]
\end{rem}

\bibliographystyle{alpha}
\bibliography{../mybib}

\end{document}